\theoremstyle{plain}
\newtheorem*{acknowledgement*}{Acknowledgements}
\theoremstyle{plain}
\newtheorem{theorem}{Theorem}[section]
\newenvironment{extraeq}[1]{\def\nome{{#1}}$$}{\leqno\rm(\nome)$$
\ignorespaces}
\def\beginfrancoextraeqarray#1{\begin{extraeq}{#1}\begin{array}{rcl}}
\def\beginfrancoeqarray#1{%
\begin{equation}\label{eq:#1}\begin{array}{rcl}}
\def\endfrancoextraeqarray{\end{array}\end{extraeq}}
\def\endfrancoeqarray{\end{array}\end{equation}}
\newenvironment{equazionespezzata}[1]
{\beginfrancoextraeqarray{#1}}{\endfrancoextraeqarray}
\newif\ifextraequationmode
\def\inoptionalequation#1{\ifextraequationmode\begin{equazionespezzata}{#1}
\else\beginfrancoeqarray{#1}\fi}
\def\outoptionalequation{\ifextraequationmode\end{equazionespezzata}
\else\endfrancoeqarray\fi\global\extraequationmodefalse}
\DeclarePairedDelimiter{\pdelim}{(}{)}
\DeclarePairedDelimiter{\bdelim}{[}{]}
\DeclarePairedDelimiter{\Bdelim}{\{}{\}}
\DeclarePairedDelimiter{\vdelim}{\lvert}{\rvert}
\DeclarePairedDelimiter{\Vdelim}{\lVert}{\rVert}
\DeclarePairedDelimiter{\adelim}{\langle}{\rangle}
\let\norm\Vdelim
   \def\Rset{\mathbb{R}}
   \def\Nset{\mathbb{N}}
   \def\Zset{\mathbb{Z}}
   \def\supp{\mathop{\rm supp}\nolimits}
   \def\exp{\mathop{\rm exp}\nolimits}
   \def\wlim{\mathop{w-\rm{lim}}}
   \newcommand{\beq}{\begin{equation}}
   \newcommand{\eeq}{\end{equation}}
   \newcommand{\ba}{\begin{array}}
   \newcommand{\ea}{\end{array}}
\def\Chi{{\bm{\chi}}}
\begin{document}


\title{A profile decomposition for the limiting Sobolev embedding}

\author[G. Devillanova]{Giuseppe Devillanova}
  \address{Dipartimento di Meccanica, Matematica e Management Politecnico di Bari, via E. Orabona n. 4, 70125 Bari, Italy}
\email{giuseppe.devillanova@poliba.it}

\author[C. Tintarev]{Cyril Tintarev} 
\address{Sankt Olofsgatan 66B, 75330 Uppsala, Sweden}
\email{tammouz@gmail.com}







\begin{abstract}
For many known non-compact embeddings of two Banach
spaces $E\hookrightarrow F$, every bounded sequence  in $E$  has a subsequence that takes form of  a \emph{profile decomposition} - a sum of clearly structured terms with asymptotically disjoint supports plus a remainder that vanishes in the norm of $F$. 
In this note we construct a profile decomposition for arbitrary sequences in the Sobolev space $H^{1,2}(M)$ of a compact Riemannian manifold, relative to the embedding of $H^{1,2}(M)$ into $L^{2^*}(M)$, generalizing the well-known profile decomposition of Struwe \cite[Proposition 2.1]{Struwe} to the case of arbitrary bounded sequences.
\end{abstract}

\subjclass[2010]{Primary 46E35, 46B50, Secondary 58J99, 35B44, 35A25.}
\keywords{concentration compactness, profile decompositions, multiscale analysis}
\maketitle


\section{Introduction}
\label{sec1}
When the embedding of two Banach spaces $E\hookrightarrow F$ is continuous and not compact, the lack of compactness can be manifested by the (behavior in $F$ of the) difference $u_{k}-u$ between the elements of a weakly convergent sequence $(u_{k})_{k\in\Nset}\subset E$ and its weak limit $u$. 
Therefore one may call \emph{defect of compactness} of $(u_k)_{k\in\Nset}$ the (sequences of) differences $u_{k}-u$ taken up to a suitable remainder that vanishes in the norm of $F$.
(Note that, if the embedding is compact and $E$ is reflexive,
the defect of compactness is itself infinitesimal and so it can be identified with zero). For many embeddings there exist
well-structured representations of the defect of compactness, known
as \emph{profile decompositions}. Best studied are profile decompositions
relative to Sobolev embeddings, which are sums of terms with asymptotically
disjoint supports, called \emph{elementary concentrations}
or \emph{bubbles}. Profile decompositions were originally motivated
by studies of concentration phenomena in PDE in the early 1980's by Uhlenbeck,
Brezis, Coron, Nirenberg, Aubin and Lions, and they play a significant
role in the verification process of the convergence of sequences of functions in applied
analysis, particularly when the information available via the classical
concentration-compactness method is not enough detailed. 

Profile decompositions are known to exist when the embedding $E\hookrightarrow F$ is \emph{cocompact} relative to some group $\mathcal{G}$ of
isometries on $E$, see \cite{SoliTi}. 
We recall that an embedding $E\hookrightarrow F$ is called cocompact relative to a group $\mathcal{G}$ of isometries ($\mathcal{G}$-cocompact for short) if any sequence $(u_{k})_{k\in\Nset}\subset E$ such that 
$g_{k}(u_{k})\rightharpoonup 0$
for any sequence of operators $(g_{k})_{k\in\Nset}\subset \mathcal{G}$ turns out to be infinitesimal in the norm of $F$. (An elementary example due to Jaffard \cite{Jaffard}, which is easy to verify, is cocompactness of embedding of $\ell^{\infty}(\Zset)$ into itself relative to the group of shifts
$\mathcal{G}:=\lbrace g_m:= (a_{n})_{n\in\Nset}\mapsto(a_{n+m})_{n\in\Nset}\;|\; m\in \Zset \rbrace$.)
Up to the authors knowledge the first cocompactness result for functional spaces is \cite[Lemma 6]{LIeb} by E. Lieb which expresses (using different terminology than the present note) that the nonhomogeneous Sobolev space $H^{1,p}(\Rset^{N})$ is cocompactly embedded into $L^{q}(\Rset^N)$, when $N>p$ and $q\in(p,p^{*})$ (where $p^{*}=\frac{N p}{N-p}$), relative to the group of shifts
$u\mapsto u(\cdot-y),\; y\in\Rset^{N}$. 
A profile decomposition relative to a group $\mathcal{G}$ of bijective isometries on a Banach space $E$ represents defect of compactness $u_k-u$ as a sum of \emph{elementary
concentrations}, or \emph{bubbles}, namely $\sum_{n\in\Nset\setminus \{0\}}g_{k}^{(n)}w^{(n)}$
 with some $g_{k}^{(n)}\in\mathcal{G}$ and $w^{(n)}\in E$.
Note that in the above sum the index $n=0$ is not allowed since, in the existing literature,
usually $w^{(0)}$ represents the weak-limit $u$ of the sequence and $(g_k^{(0)})_{k\in\Nset}$ is the constant sequence of constant value the identity map of the space. So, by using this convention, we can use defect of compactness to represent the sequence $(u_k)_{k\in\Nset}$ as a sum of 
$\sum_{n\in\Nset}g_{k}^{(n)}w^{(n)}$ and a remainder vanishing in $F$.
In the above sums each of the elements 
$w^{(n)}$ (for $n\geq 1$), called \emph{concentration profiles},
is obtained as the weak-limit (as $k\rightarrow\infty$) of the ``deflated'' sequence
$((g_{k}^{(n)})^{-1}(u_{k}))_{k\in\Nset}$ .

Typical examples of isometries groups $\mathcal{G}$, involved
in profile decompositions, are the above mentioned group of shifts $u \mapsto u(\cdot-y)$
and the rescaling group, which is a product group of shifts and dilations
$u\mapsto t^{r}u(t\cdot)$, $t>0$, where, for instance, when $u$ belongs to the homogeneous Sobolev space $\dot{H}^{s,p}(\Rset^{N})$ ($N/s>p\geq 1$, $s>0$), $r=r(p,s)=\frac{N-ps}{p}$.

Existence of profile decompositions for general bounded sequences
in $\dot{H}^{1,p}(\Rset^{N})$ (relative to the rescaling group) was
proved by Solimini, see \cite[Theorem 2]{Solimini}, and independently, but with a weaker form of remainder, by G\'erard in \cite{Gerard}, with an extension to the case of fractional Sobolev spaces by Jaffard in \cite{Jaffard}. 
Only in \cite{SchinTin}, for the first time, the authors observed that profile decomposition
(and thus concentration phenomena in general) can be understood in
functional-analytic terms, rather than in specific function spaces.
Actually the results in \cite{SchinTin} where extended in \cite{SoliTi} to uniformly
convex Banach spaces with the Opial condition (without the Opial condition
profile decomposition still exists but weak convergence must be replaced by (a less-known) Delta convergence, see \cite{DSTweak}). Finally the result has been extended up to a suitable class of metric spaces, see \cite{DSTmetric} and \cite{D}.
Despite the character of the statement in \cite{SoliTi} is rather general, profile decompositions are still true, for instance, when the space $E$
is not reflexive (e.g. \cite{AT_BV}), or when one only has a semigroup
of isometries (e.g. \cite{ATPisa}), or when the profile decomposition
can be expressed without the explicit use of a group (e.g. Struwe \cite{Struwe}) and so when \cite[Theorem 2.10]{SoliTi} does not apply. 

The present paper generalizes, in the spirit of \cite[Theorem 2]{Solimini}, Struwe's result \cite[Proposition 2.1]{Struwe} (which provides a profile decomposition for Palais-Smale sequences of particular functionals) to the case of general bounded sequences in $\dot H^{1,2}(M)$, where $M$ is a smooth compact manifold in dimension $N\geq 3$.

The paper is organized as follows. In Section \ref{sec2} we introduce some notation and state the main theorem of the paper and the result on which the related proof is based. In Section \ref{sec3} we prove that the embedding $H^{1,2}(M)\hookrightarrow L^{2^\ast}(M)$ is cocompact with respect to a group of suitable transformations which are depending on the Atlas associated to the manifold. Section \ref{sec4} is devoted to the proof of (the main) Theorem \ref{thm:StSol}.

\section{Statement of the main result}\label{sec2}
Let $N\ge 3$ and let $(M,g)$ be a compact smooth Riemannian $N$-dimensional manifold. We consider the Sobolev space $H^{1,2}(M)$ equipped with the norm defined by the quadratic form of the Laplace-Beltrami operator, 
\begin{equation}
\label{eq:norm}
\|u\|^2=\int_M (|{\mathrm d}u|^2+u^2){\mathrm d}v_g,
\end{equation}
($v_g$ denotes the Riemannian measure of the manifold).
For every $y\in M$ we shall denote by $T_y(M)$ the tangent space in $y$ to $M$, and by $\exp_y$
the exponential (local) map at the point $y$ (defined on a suitable set $U_y\subset T_y(M)$ by setting, for all $v\in U_y$, $\exp_y(v):=\gamma_v(1)$ where $\gamma_v$ is the unique geodesic, contained in $M$, such that $\gamma_v(0)=y$ and $\gamma_v^\prime(0)=v$ and extended to the case $v=0$ by setting $\exp_y(0)=y$).
Since we will not use here any property of tangent bundles we will identify tangent spaces of $M$ at different points with $\Rset^N$ and, for any $\rho>0$, we shall denote by
$B_\rho(0)$ the Euclidean $N$-dimensional ball centered at the origin with radius $\rho$.
On the other hand, we shall denote by $\mathcal{B}_\rho(y)$ the open coordinate ball (i.e.\ the subset in $M$ such that $\exp_y^{-1}(\mathcal{B}_\rho(y))=B_\rho(0)$) with center $y$ and radius $\rho>0$.
For the reader's convenience we recall that the injectivity radius $\rho_y$ of a point $y\in M$ is the radius of the largest ball about the origin in $T_y(M)$ that can be mapped diffeomorfically via the map $\exp_y$, and that, the injectivity radius of the mainfold M, $\rho_M:=\inf_{y\in M}\rho_y$.
Since $M$ is compact, $\rho_M$ is strictly positive, so we can fix $0<\rho<\frac{\rho_M}{3}$,
moreover, there exists a finite set of points $(z_{i})_{i\in I}\subset M$ such that
$\pdelim{\mathcal{B}_\rho(z_{i}),\exp^{-1}_{z_{i}}}_{i\in I}$
is a finite smooth atlas of $M$.
%

In what follows we shall fix $\Chi\in C_0^\infty (B_\rho(0))$, $\Chi=1$ on $B_\frac{\rho}{2}(0)$, so that, set for $i\in I$
\begin{equation}
\label{eq:chi_i}
\hat\chi_i:=\hat\chi_{z_{i}}=\Chi\circ\exp_{z_{i}}^{-1}\qquad \mbox{ and }\qquad
\chi_i:=\frac{\hat\chi_i}{\sum_{j\in I}\hat\chi_j},
\end{equation}
$(\chi_i)_{i\in I}$ is a smooth partition of unity on $M$ subordinated to the covering
$(\mathcal{B}_\rho(z_{i}))_{i\in I}$. Then, since
$\|u\circ \exp_{z_{i}}\|_{L^{2^*}(B_\rho(0))}$ is bounded by the $H^{1,2}(B_\rho(0))$-norm of $u\circ \exp_{z_{i}}$, the Sobolev embedding $H^{1,2}(M)\hookrightarrow L^{2^*}(M)$ can be deduced from the corresponding one on the Euclidean space (by the use of the fixed partition of unity $(\chi_i)_{i\in I}$).
In fact, Theorem~\ref{thm:StSol} below will provide a profile decomposition for bounded sequences in $H^{1,2}(M)$.

Finally we recall that the scalar product  associated with \eqref{eq:norm} can be written with help of the partition of unity $(\chi_s)_{s\in I}$ in the following coordinate form:
\begin{equation}
\label{eq:scalarp}
\begin{split}
\langle \Phi,\Psi \rangle& :=
\sum_{s\in I}\int_{B_\rho(0)} \sum_{i,j=1}^N g_{i,j}^{z_{s}}\partial_i((\chi_s\Phi) (\exp_{z_{s}}(\xi)))\partial_j(\Psi (\exp_{z_{s}}(\xi)))\sqrt{\det(g_{i,j}^{z_{s}})}{\mathrm d} \xi+\\
&\quad \sum_{s\in I}\int_{B_\rho(0)} (\chi_s\Phi) (\exp_{z_{s}}(\xi))
\Psi(\exp_{z_{s}}(\xi))\sqrt{\det(g_{i,j}^{z_{s}})}{\mathrm d} \xi.
\end{split}
\end{equation}

Before stating the theorem, we warn the reader that, given a bounded sequence
$(v_k)_{k\in\Nset}\subset H^{1,2}(B_\rho(0))$ and a vanishing sequence of positive numbers
$(t_k)_{k\in\Nset}$, and setting $r=r(2)=\frac{N}{2^\ast}=\frac{N-2}{2}$,
we will say (with a slight abuse on the definition of weak convergence) that the sequence
$(t_k^r v_k(t_k\cdot))_{k\in\Nset}$ weakly converges to $v\in \dot H^{1,2}(\Rset^N)$ if for any $\varphi\in C_0^\infty(\Rset^N)$ such that 
$\supp\varphi\subset B_\rho(0)$
\begin{equation*}
\int \varphi(x)\, t_k^r v_k(t_kx)\,{{\mathrm d}x} \longrightarrow
\int \varphi(x)\, v(x)\,{{\mathrm d}x}\mbox{ as } k\rightarrow\infty.
\end{equation*}

\begin{theorem}\label{thm:StSol}
Let $M$ be a compact smooth Riemannian $N$-dimensional manifold ($N\ge 3$). Let
$\rho\in(0,\frac{\rho_M}{3})$, let $\Chi\in C_0^\infty(B_\rho(0))$, $\Chi=1$ on $B_\frac{\rho}{2}(0)$, and let $(\chi_i)_{i\in I}$, defined by \eqref{eq:chi_i}, be a smooth partition of unity on $M$ subordinated to the covering $(\mathcal{B}_\rho(z_{i}))_{i\in I}$.
Then, given a bounded sequence $(u_k)_{k\in\Nset}$ in $H^{1,2}(M)$ and, with $r=\frac{N}{2^\ast}=\frac{N-2}{2}$, there exist:
\begin{itemize}
\item a sequence $\pdelim{Y^{(n)}}_{n\in\Nset\setminus\Bdelim{0}}$ of sequences 
$Y^{(n)}:=\pdelim{y_{k}^{(n)}}_{k\in\Nset}\subset M$, $y_{k}^{(n)} \to \bar y^{(n)}\in M$, 
\item a sequence $\pdelim{J^{(n)}}_{n\in\Nset\setminus\Bdelim{0}}$ of sequences 
$J^{(n)}:=\pdelim{j_{k}^{(n)}}_{k\in\Nset}\subset \Rset_+$, 
\item a sequence $\pdelim{w^{(n)}}_{n\in\Nset\setminus\Bdelim{0}}$ of functions (profiles) $w^{(n)}\in\dot H^{1,2}(\Rset^N)$,
\end{itemize}
such that, modulo subsequences, 
\begin{equation}\label{eq:jtoinf}
j_{k}^{(n)}\longrightarrow\infty \mbox{ as } k\rightarrow\infty
\quad \forall n\in\Nset \setminus \{0\},
\end{equation}
\begin{equation}\label{eq:expl_orth}
|j_k^{(n)}-j_k^{(m)}|+2^{j_{k}^{(n)}}d(y_{k}^{(n)},y_{k}^{(m)})\to\infty
 \mbox{ whenever } m\neq n,
\end{equation}
\begin{equation}\label{eq:profileM}
2^{-j_{k}^{(n)}r} u_k\circ \exp_{y_k^{(n)}}(2^{-j_{k}^{(n)}}\cdot)
{\rightharpoonup} w^{(n)}\;\mbox{ in } \dot H^{1,2}(\Rset^N) \mbox{ as } k\rightarrow\infty.
\end{equation}
Moreover, setting for all $k\in\Nset$ 
\begin{equation}\label{eq:pdM}
\mathcal{S}_k(x):=\sum_{n\in\Nset\setminus\Bdelim{0}}
2^{j_{k}^{(n)} r}\,
\Chi\circ\exp^{-1}_{y_k^{(n)}}(x)\; w^{(n)}\pdelim{2^{j_{k}^{(n)}}\exp^{-1}_{y_k^{(n)}}(x)},\;x\in M, 
\end{equation}
the series $\mathcal{S}_k\in \dot H^{1,2}(M)$ are unconditionally convergent (with respect to $n$) and 
the sequence $(\mathcal{S}_k)_{k\in\Nset}$ is uniformly convergent (with respect to $k$) in 
$\dot H^{1,2}(M)$, in addition
\begin{equation}\label{eq:vanishM}
u_k-u-\mathcal{S}_k\rightarrow 0 \mbox{ in } L^{2^*}(M)\,.
\end{equation}
Finally the following energy bound holds
\begin{equation}\label{PlancherelM}
\sum_{n\in\Nset\setminus\{0\}}\norm{\nabla w^{(n)}}^2_{L^2(\Rset^N)}+\norm{u}^2_{H^{1,2}(M)}\le \liminf_{k\rightarrow \infty} \norm{u_k}^2_{H^{1,2}(M)}.
\end{equation}
\end{theorem}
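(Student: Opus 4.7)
The plan is to iteratively extract bubbles, using the cocompactness of $H^{1,2}(M)\hookrightarrow L^{2^*}(M)$ established in Section~\ref{sec3} as the terminating criterion and the Euclidean profile decomposition of Solimini as the engine. First, pass to a subsequence so that $u_k\rightharpoonup u$ in $H^{1,2}(M)$ and reduce the problem to finding a profile decomposition for $u_k-u$ whose remainder vanishes in $L^{2^*}(M)$. The induction will produce, at step $n$, a triple $(y_k^{(n)},j_k^{(n)},w^{(n)})$ such that subtracting the corresponding bubble from the current remainder $r_k^{(n-1)}$ yields a new remainder $r_k^{(n)}$ with strictly smaller gradient energy.

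At the inductive step, if $r_k^{(n-1)}\to 0$ in $L^{2^*}(M)$ the process terminates. Otherwise, the contrapositive of the cocompactness proved in Section~\ref{sec3} provides sequences $(y_k^{(n)})\subset M$ and $(j_k^{(n)})\subset \Rset_+$ such that the deflated sequence
\[v_k^{(n)}(x):=2^{-j_k^{(n)} r}\, r_k^{(n-1)}\!\circ\exp_{y_k^{(n)}}\!\bigl(2^{-j_k^{(n)}} x\bigr)\]
has a nonzero weak limit $w^{(n)}\in\dot H^{1,2}(\Rset^N)$, which will be \eqref{eq:profileM}. A Brezis--Lieb--type gradient decomposition, carried out locally in the normal chart at $\bar y^{(n)}=\lim_k y_k^{(n)}$ (where one uses the fact that the metric coefficients $g^{z_s}_{i,j}$ in \eqref{eq:scalarp} tend to the Euclidean ones under the rescaling $\xi\mapsto 2^{-j_k^{(n)}}\xi$), then yields
\[\liminf_k \|\nabla r_k^{(n-1)}\|_{L^2(M)}^2 \;\geq\; \liminf_k \|\nabla r_k^{(n)}\|_{L^2(M)}^2 + \|\nabla w^{(n)}\|_{L^2(\Rset^N)}^2,\]
which forces $\sum_n \|\nabla w^{(n)}\|^2$ to converge and delivers \eqref{PlancherelM}. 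The orthogonality condition \eqref{eq:expl_orth} is automatic: if it failed for some $m\neq n$, then the deflation at step $m$ would see the previously subtracted bubble of step $n$ as a fixed nonzero background, forcing $w^{(m)}\neq 0$ to have already been absorbed in step $n$. The asymptotic escape \eqref{eq:jtoinf} follows from compactness of $M$, since a bubble with bounded $j_k^{(n)}$ would converge, up to subsequences, in $H^{1,2}(M)$ and be absorbed into $u$ at the first extraction.

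The unconditional convergence of $\mathcal{S}_k$ in $\dot H^{1,2}(M)$, uniform in $k$, follows from \eqref{eq:expl_orth} together with the summability of $\|\nabla w^{(n)}\|^2$: indeed, the localized bubbles are asymptotically pairwise orthogonal in $\dot H^{1,2}(M)$, so a Bessel--type tail estimate controls the partial sums independently of~$k$. The main obstacle is closing the extraction to obtain \eqref{eq:vanishM}: even when the process never terminates, the infinite-series remainder $u_k-u-\mathcal{S}_k$ must still vanish in $L^{2^*}(M)$. This will be handled by a diagonal subsequence argument coupled to a quantitative form of the cocompactness of Section~\ref{sec3}. Concretely, one tracks
\[\alpha_n \;:=\; \limsup_k \;\sup_{(y,j)\in M\times\Rset_+}\bigl\|2^{-jr}\, r_k^{(n)}\!\circ\exp_y(2^{-j}\cdot)\bigr\|_{\dot H^{1,2}(B_\rho(0))},\]
choosing each $w^{(n+1)}$ so as to nearly realize $\alpha_n$; the energy decrement $\sum_n\|\nabla w^{(n)}\|^2<\infty$ forces $\alpha_n\to 0$, and the cocompactness criterion of Section~\ref{sec3} then converts the vanishing of every admissible concentration level into the vanishing of the remainder in $L^{2^*}(M)$, yielding \eqref{eq:vanishM}.
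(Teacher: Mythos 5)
Your route is genuinely different from the paper's: you run the abstract iterative extraction scheme (cocompactness as the terminating criterion, one bubble subtracted per step, Brezis--Lieb energy decrement, Bessel tail control), whereas the paper never iterates on the manifold at all. It localizes $u_k$ by the partition of unity, applies Solimini's Euclidean profile decomposition to each $(\chi_i u_k)\circ\exp_{z_i}$ on $B_\rho(0)$ (Theorem \ref{thm:PDsob:2}), and then does the real work in transferring each Euclidean bubble from chart-centered coordinates to normal coordinates at the concentration point (a $C^1$ transition-map estimate) and in \emph{synchronizing} bubbles that are detected in several overlapping charts via an equivalence relation on pairs $(y_k,j_k)$. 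That synchronization is what produces \eqref{eq:expl_orth} and the single global profile $w^{(n)}=\sum_i w_i^{(n)}$; in the paper the vanishing of the remainder \eqref{eq:vanishM} is then inherited for free from Solimini's theorem in each chart, and the cocompactness theorem of Section \ref{sec3} is not actually invoked in the proof of Theorem \ref{thm:StSol}.

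There is, however, a concrete flaw in the step you rely on to close the extraction. The quantity $\alpha_n$ as you define it --- the sup over $(y,j)$ of the $\dot H^{1,2}(B_\rho(0))$-norm of the \emph{deflated sequence} $2^{-jr}r_k^{(n)}\circ\exp_y(2^{-j}\cdot)$ --- cannot tend to zero: the map $v\mapsto 2^{-jr}v(2^{-j}\cdot)$ is (up to the metric distortion, which is $1+o(1)$ as $j\to\infty$) an isometry for the gradient norm, so your sup is comparable to $\|\nabla r_k^{(n)}\|_{L^2}$ on a coordinate ball, which stays bounded away from zero whenever energy remains. The standard fix is to let $\alpha_n$ be the supremum of $\|w\|_{\dot H^{1,2}}$ (or of $\|w\|_{L^{2^*}}$) over all \emph{weak limits} $w$ of admissible deflations of $r_k^{(n)}$; then near-maximal extraction plus $\sum_n\|\nabla w^{(n)}\|^2<\infty$ does give $\alpha_n\to 0$, and the qualitative cocompactness of Section \ref{sec3}, applied to the full remainder $u_k-u-\mathcal{S}_k$, suffices (no quantitative version is needed) \emph{provided} you also prove that every admissible deflation of the infinite bubble tail converges weakly to zero --- which requires \eqref{eq:expl_orth} and a Bessel-type tail bound that you assert but do not establish. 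Two further steps are named rather than proved and are exactly where the manifold geometry enters: the energy decrement inequality (your Brezis--Lieb splitting in normal coordinates, including the error coming from the cutoff $\Chi$), and the claim that a violation of \eqref{eq:expl_orth} is ``automatic''ly excluded; both need the same $\exp_{y}^{-1}\circ\exp_{y'}$ transition-map computations that occupy Steps 1--2 and 4 of the paper's proof. As it stands the proposal is a correct strategy with a wrong key definition and the hardest estimates left as declarations.
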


We want to emphasize that \eqref{eq:vanishM} states that, modulo subsequence, the defect of compactness $u_k-u$ of the bounded sequence $(u_k)_{k\in\Nset}$ (which, modulo subsequence, weakly converges to $u$) has a representation given (up to a remainder which vanishes in the norm of $L^{2^*}(M))$ by the clearly structured terms in $\mathcal{S}_k$.

The proof of this theorem is based on the following easy corollary to Solimini's profile decomposition \cite[Theorem 2]{Solimini}.

\begin{theorem}\label{thm:PDsob:2} 
Given $m\in\Nset\setminus \lbrace 0 \rbrace$ and $1<p<\frac{N}{m}$ let $r=\frac{N}{p^\ast(m)}=\frac{N-mp}{p}$.
Let $(v_k)_{k\in\Nset}$ be a bounded sequence in the homogeneous Sobolev space
$\dot H^{m,p}(\Rset^N)$ supported on a compact set $K\subset \Rset^N$. Then, there exists a (renamed) subsequence (s.t. $v_k\rightharpoonup v$) whose defect of compactness $v_k-v$ has the form
\begin{equation*}\label{eq:PDCompsup}
S_k=\sum_{n\in\Nset\setminus\Bdelim{0}}2^{j_k^{(n)}r} w^{(n)}(2^{j_k^{(n)}}(\cdot-\xi_k^{(n)})),
\end{equation*}
where, for any $n\in \Nset \setminus \{0\}$,
$\Xi^{(n)}:=(\xi_k^{(n)})_{k\in\Nset}\subset K$, and 
$J^{(n)}:=(j_k^{(n)})_{k\in\Nset}\subset \Rset$ are such that 
$j_k^{(n)}{\rightarrow}+\infty$ as $k\to \infty$ and 
$w^{(n)}$ is the weak limit of the sequence $\pdelim{2^{-j_k^{(n)}r} v_k(2^{-j_k^{(n)}}\cdot+\xi_k^{(n)})}_{k\in\Nset}$. Moreover the addenda are asymptotically mutually orthogonal, i.e.\
\begin{equation}\label{eq:orth0}
|j_k^{(n)}-j_k^{(m)}|+2^{j_k^{(n)}}|\xi_k^{(n)}-\xi_k^{(m)}|\to\infty \mbox{ whenever } m\neq n.
\end{equation}
\end{theorem}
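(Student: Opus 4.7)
The plan is to derive Theorem \ref{thm:PDsob:2} directly from Solimini's profile decomposition \cite[Theorem 2]{Solimini}, via a purely cosmetic rewriting of scales in base $2$ (writing $t = 2^j$ with $j \in \Rset$) together with a relocation of the translation centers from $\Rset^N$ to $K$, using the compact support of the sequence.

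I would first apply Solimini's theorem to $(v_k)_{k\in\Nset}\subset\dot H^{m,p}(\Rset^N)$, obtaining, along a subsequence, a weak limit $v$, mutually asymptotically orthogonal rescaling parameters $(t_k^{(n)},\eta_k^{(n)})_{n\ge 1}\subset(0,\infty)\times\Rset^N$, and profiles $w^{(n)}\in\dot H^{m,p}(\Rset^N)$ with
\begin{equation*}
v_k - v - \sum_{n\ge 1}(t_k^{(n)})^r\,w^{(n)}\bigl(t_k^{(n)}(\cdot-\eta_k^{(n)})\bigr)\to 0 \ \text{ in } \ L^{p^\ast(m)}(\Rset^N).
\end{equation*}
That every $t_k^{(n)}\to\infty$ is forced by the support hypothesis: the deflated sequence $(t_k^{(n)})^{-r}v_k((t_k^{(n)})^{-1}\cdot+\eta_k^{(n)})$, whose weak limit equals $w^{(n)}$, is supported in $t_k^{(n)}(K-\eta_k^{(n)})$. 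If $t_k^{(n)}\to 0$ this support collapses to at most a single point, forcing $w^{(n)}\equiv 0$ and contradicting the non-triviality of the bubble; if instead $t_k^{(n)}$ stays bounded and $|\eta_k^{(n)}|\to\infty$ (the only remaining mode of Solimini-divergence), the deflated sequence escapes $K$ and again $w^{(n)}\equiv 0$. Setting $j_k^{(n)}:=\log_2 t_k^{(n)}\in\Rset$ therefore yields $j_k^{(n)}\to\infty$ and matches the scaling factor $2^{j_k^{(n)}}$ in the statement.

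Next I would relocate each $\eta_k^{(n)}$ to a nearby $\xi_k^{(n)}\in K$. The distance from $0$ to the deflated support equals $t_k^{(n)}\dist(\eta_k^{(n)},K)$, whose divergence along a subsequence would again force $w^{(n)}=0$; consequently it is bounded. Choosing $\xi_k^{(n)}\in K$ closest to $\eta_k^{(n)}$ and passing to a further diagonal subsequence, $s_k^{(n)}:=t_k^{(n)}(\xi_k^{(n)}-\eta_k^{(n)})\to s^{(n)}\in\Rset^N$. Replacing $w^{(n)}$ with its translate $w^{(n)}(\cdot-s^{(n)})$ casts each bubble in the required form $2^{j_k^{(n)}r}w^{(n)}(2^{j_k^{(n)}}(\cdot-\xi_k^{(n)}))$, the resulting error vanishing in $L^{p^\ast(m)}$ by continuity of translation on $L^{p^\ast(m)}(\Rset^N)$. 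Condition \eqref{eq:orth0} is inherited from Solimini's orthogonality via $|\xi_k^{(n)}-\xi_k^{(m)}|\ge|\eta_k^{(n)}-\eta_k^{(m)}|-O((t_k^{(n)})^{-1})-O((t_k^{(m)})^{-1})$ together with $t_k^{(n)}=2^{j_k^{(n)}}$. The main obstacle I anticipate is the diagonal extraction: one must simultaneously perform countably many subsequence selections (one per bubble to extract $s^{(n)}$) while preserving the unconditional $L^{p^\ast(m)}$-convergence of the full series, which is guaranteed by the absolute summability of the profile energies supplied by Solimini's energy bound.
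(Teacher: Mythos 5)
Your proposal is correct and follows essentially the same route as the paper: apply Solimini's profile decomposition and then use the compact support of $(v_k)_{k\in\Nset}$ to rule out scales tending to zero or remaining bounded, and to force the translation centers into $K$. You are in fact slightly more careful than the paper at one point --- the paper simply asserts that $\xi_k^{(n)}\in K$ for $k$ large, whereas you justify this by relocating each center to its nearest point of $K$ and absorbing the bounded rescaled shift into a translate of the profile, which is the honest way to reach that conclusion.
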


\begin{proof}
We shall assume, without restrictions, that $u_k\rightharpoonup 0$.
According to the profile decomposition result \cite[Theorem 2]{Solimini}, modulo the extraction of a subsequence, each term $v_k$ has concentration terms (depending on $n$) of the following shape 
\begin{equation*}\label{eq:conc_R}
c_{k}^n:=2^{j_k^{(n)}r}w^{(n)}(2^{j_k^{(n)}}(\cdot-\xi_k^{(n)}))
\end{equation*}
for some 
$\xi_k^{(n)}\in\Rset^N$, $j_k^{(n)}\in \Rset$ 
where $w^{(n)}$ is obtained as the weak limit of the sequence $\pdelim{2^{-j_k^{(n)}r} v_k(2^{-j_k^{(n)}}\cdot+\xi_k^{(n)})}_{k\in\Nset}$.
We claim that the sequence $J^{(n)}$ is bounded from below.
Indeed, on the contrary, the assumption $j_k^{(n)}{\rightarrow} -\infty$ as $k\to\infty$ would imply, since $v_k$ has a bounded support, that
\begin{equation*}
\norm{2^{-j_k^{(n)}r} v_k\pdelim{2^{-j_k^{(n)}}\cdot+\xi_k^{(n)}}}_p
{\rightarrow} 0 \mbox{ as } k\to\infty,
\end{equation*}
and so that $w^{(n)}=0$.

As a consequence $\xi_k^{(n)}\in K$ for $k$ large enough.
Note also that $J^{(n)}$ cannot have any bounded subsequence, since otherwise $(v_k)_{k\in\Nset}$ should have a nonzero weak limit, in contradiction to our assumptions.

Finally, condition \eqref{eq:orth0} is the condition of asymptotic orthogonality (decoupling) of bubbles from \cite{Solimini}. 
\end{proof}


\section{Cocompactness in Sobolev spaces of compact manifolds}\label{sec3}
The Sobolev embedding $H^{1,2}(M)\hookrightarrow L^{2^*}(M)$ has the following  property of cocompactness type.
\begin{theorem}
Let $M$ be a compact smooth Riemannian $N$-dimensional manifold ($N\ge 3$), and 
$0<\rho<\frac{\rho_M}{3}$. Let $\pdelim{\mathcal{B}_\rho(z_{i}),\exp^{-1}_{z_{i}}}_{i\in I}$
be a finite smooth atlas of $M$ and let $\Chi\in C_0^\infty (B_\rho(0))$ so that $(\chi_i)_{i\in I}$, defined by \eqref{eq:chi_i}, is a smooth partition of unity on $M$ subordinated to the covering
$(\mathcal{B}_\rho(z_{i}))_{i\in I}$.
Set 
$r=r(2)=\frac{N}{2^\ast}=\frac{N-2}{2}$. If $(u_k)_{k\in\Nset}$ is any bounded sequence in $H^{1,2}(M)$ such that 
for every $i\in I$, $(y_k)_{k\in\Nset}\subset \mathcal{B}_\rho(z_{i})$, and $(j_k)_{k\in\Nset}\subset\Nset$ such that $j_k\rightarrow +\infty$
\begin{equation}\label{eq:Struwevanish}
2^{-j_k r}(\chi_i u_k)\circ\exp_{y_k}(2^{-j_k}\cdot)
{\rightharpoonup} 0 \mbox{ as } k \to\infty,
\end{equation}
then $u_k{\rightarrow} 0$ in $L^{2^\ast}(M)$.
\end{theorem}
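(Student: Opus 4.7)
The plan is to localize the statement via the partition of unity $(\chi_i)_{i\in I}$, transplant it to $\Rset^N$ through the charts $\exp_{z_i}^{-1}$, and then apply Solimini's profile decomposition (Theorem~\ref{thm:PDsob:2}). Since $\sum_{i\in I}\chi_i\equiv 1$, the triangle inequality and a change of variable through $\exp_{z_i}$ give
\begin{equation*}
\|u_k\|_{L^{2^\ast}(M)}\le\sum_{i\in I}\|\chi_iu_k\|_{L^{2^\ast}(M)}\le C\sum_{i\in I}\|v_k^i\|_{L^{2^\ast}(\Rset^N)},
\end{equation*}
where $v_k^i:=(\chi_iu_k)\circ\exp_{z_i}$, extended by $0$ outside $B_\rho(0)$. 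Each $v_k^i$ is bounded in $\dot H^{1,2}(\Rset^N)$ with support in the fixed compact set $\supp\Chi\Subset B_\rho(0)$, so it is enough to prove $v_k^i\to 0$ in $L^{2^\ast}(\Rset^N)$ for every $i\in I$. Theorem~\ref{thm:PDsob:2} applied to $v_k^i$ yields, up to a subsequence, a weak limit $v^i$ and a profile decomposition
\begin{equation*}
v_k^i-v^i-\sum_{n\ge 1}2^{j_k^{(n)}r}w^{(n)}\bigl(2^{j_k^{(n)}}(\cdot-\xi_k^{(n)})\bigr)\longrightarrow 0\quad\text{in }L^{2^\ast}(\Rset^N),
\end{equation*}
with $j_k^{(n)}\to+\infty$ and $\xi_k^{(n)}\to\bar\xi^{(n)}\in\supp\Chi\subset B_\rho(0)$.

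I would then show that the hypothesis forces every profile $w^{(n)}$ to vanish. Fix $n$ and set $y_k^{(n)}:=\exp_{z_i}(\xi_k^{(n)})\in\mathcal{B}_\rho(z_i)$; the assumption, applied to this sequence of points and the scale $j_k^{(n)}$, gives
\begin{equation*}
H_k(\eta):=2^{-j_k^{(n)}r}(\chi_iu_k)\bigl(\exp_{y_k^{(n)}}(2^{-j_k^{(n)}}\eta)\bigr)\rightharpoonup 0.
\end{equation*}
I would compare $H_k$ with the Solimini rescaling
\begin{equation*}
S_k(\eta):=2^{-j_k^{(n)}r}v_k^i\bigl(\xi_k^{(n)}+2^{-j_k^{(n)}}\eta\bigr),
\end{equation*}
whose weak limit is $w^{(n)}$, through the transition diffeomorphism $\phi_k:=\exp_{z_i}^{-1}\circ\exp_{y_k^{(n)}}$. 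Since $\phi_k(0)=\xi_k^{(n)}$ and $A_k:=D\phi_k(0)$ is a uniformly bounded linear isomorphism converging to some invertible $A_\infty$, the first-order Taylor expansion
\begin{equation*}
\phi_k(2^{-j_k^{(n)}}\eta)=\xi_k^{(n)}+2^{-j_k^{(n)}}A_k\eta+O\bigl(2^{-2j_k^{(n)}}|\eta|^2\bigr)
\end{equation*}
shows that at the concentration scale $2^{-j_k^{(n)}}$ the nonlinear part of $\phi_k$ is negligible, so $H_k$ and $\eta\mapsto S_k(A_k\eta)$ share the same weak limit. Passing to the limit and using invertibility of $A_\infty$, one obtains $w^{(n)}(A_\infty\,\cdot)\equiv 0$, hence $w^{(n)}=0$.

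Once every profile is killed, Solimini's theorem reduces to $v_k^i\to v^i$ strongly in $L^{2^\ast}(\Rset^N)$; combined with the fact that, in the intended application, $u_k\rightharpoonup 0$ in $H^{1,2}(M)$ (so that $v^i\equiv 0$), one concludes $u_k\to 0$ in $L^{2^\ast}(M)$. The main obstacle I anticipate is the perturbation step above: one has to check that the nonlinear remainder of $\phi_k$ does not contaminate the weak-$\dot H^{1,2}$ limit at scale $2^{-j_k^{(n)}}$, and that the Jacobian factor $\sqrt{\det(g^{y_k^{(n)}}(2^{-j_k^{(n)}}\eta))}$ behaves as a constant on bounded sets of $\eta$. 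Compactness of $M$ yields uniform control over the charts and keeps the linearizations $A_k$ uniformly bounded and invertible, which is what allows the perturbation argument to go through.
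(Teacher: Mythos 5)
Your argument is correct and its technical core --- transferring the hypothesis from the moving charts $\exp_{y_k}$ to the fixed chart $\exp_{z_i}$ by a first-order Taylor expansion of the transition map $\exp_{z_i}^{-1}\circ\exp_{y_k}$ at the concentration scale $2^{-j_k}$ --- is exactly the computation the paper performs. The difference lies in which Euclidean input is then invoked: the paper proves the weak vanishing \eqref{eq:wc01} for \emph{arbitrary} sequences $(\xi_k)_{k\in\Nset}\subset\Rset^N$ and $j_k\to+\infty$, and then quotes Solimini's cocompactness theorem \cite[Theorem 1]{Solimini} to conclude $(\chi_iu_k)\circ\exp_{z_i}\to0$ in $L^{2^*}(\Rset^N)$, whereas you quote the stronger profile decomposition (Theorem~\ref{thm:PDsob:2}) and use the chart comparison only along the specific concentration sequences $(\xi_k^{(n)})_{k\in\Nset}$ to kill each profile. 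Both routes work; yours employs heavier machinery but needs the perturbation estimate only at the points produced by the decomposition rather than uniformly. One remark in your favour: your explicit appeal to $u_k\rightharpoonup0$ (so that $v^i\equiv0$) is not cosmetic. The hypothesis \eqref{eq:Struwevanish} is restricted to $j_k\to+\infty$ and therefore does not force the weak limit of $(u_k)_{k\in\Nset}$ to vanish (a constant sequence $u_k=u\neq0$ satisfies it), so some such assumption is genuinely needed for the conclusion; the paper's proof relies on it implicitly as well, since Solimini's cocompactness also requires weak vanishing under bounded dilations, and in this paper the theorem is only applied after reducing to the case $u_k\rightharpoonup0$.
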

\begin{proof}
We claim that for all sequences $(\xi_k)_{k\in\Nset}\subset\Rset^N$ and $(j_k)_{k\in\Nset}\subset \Nset$ such that $j_k\rightarrow +\infty$ and for every $i\in I$ we have
\begin{equation}\label{eq:wc01}
2^{-j_k r}(\chi_iu_k)\circ\exp_{z_{i}}(2^{-j_k}\cdot+\xi_k){\rightharpoonup} 0	\quad \mbox{as }k\to\infty.
\end{equation}
Since \eqref{eq:wc01} is obviously true when $|\xi_k|\ge \rho$, (indeed the terms in \eqref{eq:wc01} are identically zero for $k$ large enough), we shall assume $\xi_k\in B_\rho(0)$ for all $k\in\Nset$.
Given $i\in I$, we set $y_k:=\exp_{z_{i}}(\xi_k)\in M$ and denote by 
$\psi_k$ the transition map between the charts $(\mathcal{B}_\rho(z_{i}),\exp_{z_{i}}^{-1})$ and
$(\mathcal{B}_\rho(y_k),\exp_{y_k}^{-1})$ 
i.e.\ we set $\psi_k:= \exp_{y_k}^{-1}\circ \exp_{z_{i}}$
(so that $\exp_{z_{i}}=\exp_{y_k}\circ \psi_k$ and $\psi_k(\xi_k)=0$).
Therefore, for $k$ large enough, by using Taylor expansion of the first order at $\xi_k$
(where, for a lighter notation, we denote by $\psi_k^\prime(\xi_k)$ the Jacobi matrix of $\psi_k$ at $\xi_k$ 
$(\psi_k^\prime(\xi_k))^{-1}$ its inverse and by $|(\psi_k^\prime(\xi_k))^{-1}|$ the corresponding Jacobian,
and drop the dot symbol for the rows-by-columns product) we get, since $j_k\to +\infty$, that 
\begin{equation}
\label{eq:SVM}
\begin{split}
2^{-j_k r}(\chi_i u_k)(\exp_{z_{i}}(2^{-j_k}\xi+\xi_k)) & =
2^{-j_k r}(\chi_i u_k)(\exp_{y_k}\circ\psi_k)(2^{-j_k}\xi+\xi_k)=\\
&\;
2^{-j_k r}(\chi_i u_k)(\exp_{y_k}(2^{-j_k}
(\psi_k^\prime(\xi_k)+o(1))\xi)).
\end{split}
\end{equation}
(we are using the Landau symbol $o(1)$ to denote any (matrix valued) function uniformly convergent to zero).
In correspondence to any test function $\varphi\in C_0^\infty(\Rset^N)$,
\begin{equation*}\label{eq:approdiff}
\begin{split}
&\int_{B_{2\rho}(0)}\varphi(\xi)2^{-j_k r}\bdelim{(\chi_i u_k)\circ\exp_{z_{i}}(2^{-j_k}\xi+\xi_k)-(\chi_i u_k)\circ\exp_{y_k}(2^{-j_k}
\psi_k^\prime(\xi_k)\xi)}{\mathrm d} \xi =\\
&\int_{B_{2\rho}(0)}\varphi(\xi)2^{-j_k r}\bdelim{(\chi_i u_k)\circ\exp_{y_k}\circ\psi_k(2^{-j_k}\xi+\xi_k)-(\chi_i u_k)\circ\exp_{y_k}(2^{-j_k}\psi_k^\prime(\xi_k)\xi)}{\mathrm d} \xi=\\
&|(\psi_k^\prime(\xi_k))^{-1}|2^{j_k\frac{N+2}{2}} \int_{|\eta|<C2^{-j_k}}\varphi(2^{j_k}(\psi_k^\prime(\xi_k))^{-1}\eta)\times \\
&\qquad \bdelim{(\chi_i u_k)\circ\exp_{y_k}\left(\psi_k((\psi_k^\prime(\xi_k))^{-1}\eta+\xi_k\right)-(\chi_i u_k)\circ\exp_{y}(\eta)}{\mathrm d} \eta=\\
&|(\psi_k^\prime(\xi_k))^{-1}|2^{j_k\frac{N+2}{2}}\int_0^1{\mathrm d} s\int_{|\eta|<C2^{-j_k}}\varphi(2^{j_k}(\psi_k^\prime(\xi_k))^{-1}\eta) \times\\
&\qquad\nabla\pdelim{(\chi_iu_k)\circ\exp_{y_k}
(s\psi_k\pdelim{(\psi_k^\prime(\xi_k))^{-1}\eta+\xi_k)+(1-s)\eta}}
\cdot(\psi_k((\psi_k^\prime(\xi_k))^{-1}\eta+\xi_k)-\eta){\mathrm d} \eta,\\
\end{split}
\end{equation*}
(the second equality holds by integrating with respect to the variable $\eta=2^{-j_k}\psi_k^\prime(\xi_k)\xi$). 
Set, for each $s\in[0,1]$,
$\zeta:= s\psi_k\pdelim{(\psi_k^\prime(\xi_k))^{-1}\eta+\xi_k}+(1-s)\eta$, since for
$\eta\rightarrow 0$, $\zeta=\eta+O(|\eta|^2)$ and since the Jacobian of the transformation is close to $1$ in the domain of integration, the modulus of the last expression is bounded by the following one, which, in turn, can be estimated by Cauchy inequality. So, we have
\begin{align*}
C2^{j_k\frac{N+2}{2}}\int_{|\zeta|<C2^{-j_k}}
\varphi(2^{j_k}(\psi_k^\prime(\xi_k))^{-1}\eta(\zeta))
|\nabla (\chi_i u_k)\circ\exp_{y_k}(\zeta)| |\zeta|^2{\mathrm d}\zeta &\le 
\\
C2^{j_k\frac{N+2}{2}}\|\nabla (\chi_iu_k)\circ\exp_{y_k}\|_2 
\left(\int_{|\zeta|<C2^{-j_k}}
|\varphi(2^{j_k}(\psi_k^\prime(\xi_k))^{-1}\eta(\zeta))|^2
|\zeta|^4{\mathrm d}\zeta
\right)^\frac12 &\le 
\\
C2^{j_k\frac{N+2}{2}}\|u_k\|_{H^{1,2}(M)}\left(\int_{|\xi|<C}
|\varphi(\xi)|^2
2^{-4j_k}|\xi|^42^{-j_k N}{\mathrm d}\xi\right)^\frac12\le C2^{-j_k}
{\longrightarrow} 0.
\end{align*}
Therefore, by taking into account \eqref{eq:SVM}, we deduce that both sequences
$\pdelim{2^{-j_k r}(\chi_i u_k)(\exp_{y_k}(2^{-j_k}\cdot))}_{k\in\Nset}$ and
$\pdelim{2^{-j_k r}(\chi_i u_k)(\exp_{z_{i}}(2^{-j_k}\;\cdot+\xi_k))}_{k\in\Nset}$ have the same weak limit and, since \eqref{eq:Struwevanish} holds true, \eqref{eq:wc01} holds too.


Consequently, from the cocompactness of the embedding
$\dot H^{1,2}(\Rset^N)\hookrightarrow L^{2^*}(\Rset^N)$ (\cite[Theorem 1]{Solimini}), 
 it follows that for every $i\in I$,
\begin{equation*}\label{eq:locva}
(\chi_i u_k)\circ\exp_{z_{i}}{\rightarrow} 0
\quad\mbox{ in } L^{2^*}(\Rset^N) \mbox{ as } k\to\infty,
\end{equation*}
and therefore, since $(\chi_i)_{i\in I}$ is a partition of unity subordinated to the atlas 
$\pdelim{\mathcal{B}_\rho(z_{i}),\exp_{z_{i}}^{-1}}_{i\in I}$,
we deduce that
\begin{align*}
\int_M|u_k|^{2^*}{\mathrm d} v_g =
\int_M\vdelim{\sum_{i\in I}\chi_i u_k}^{2^*}{\mathrm d} v_g \le
C \sum_{i\in I}\int_{\mathcal{B}_\rho(z_{i})}|\chi_i u_k|^{2^*}{\mathrm d} v_g \le
\\
C \sum_{i\in I}\int_{B_\rho(0)}|u_k\circ\exp_{z_{i}}(\xi)|^{2^*}{\mathrm d}\xi \rightarrow 0,
\end{align*} 
which proves the statement of the theorem.
\end{proof}


\section{Proof of Theorem \ref{thm:StSol} (profile decomposition)}\label{sec4}

1. Without loss of generality we may assume (by replacing $u_k$ with $u_k-u$) that $u_k\rightharpoonup 0$. 

Then, setting for all $i\in I$
\begin{equation}
\label{eq:vki}
v_{k,i}:=(\chi_i u_k)\circ\exp_{z_{i}}
\end{equation}
we get that the sequence $(v_{k,i})_{k\in\Nset}$ is bounded in $H^{1,2}_0(B_\rho(0))$ (and weakly converges to zero), and so we can consider a profile decomposition of $(v_{k,i})_{k\in\Nset}$ given by
 Theorem~\ref{thm:PDsob:2} when $m=1$ and $r=\frac{N-2}{2}$. An iterated extraction allows to find a subsequence which has a profile decomposition for every $i\in I$ i.e.\ such that for all $i\in I$ the defect of compactness of $v_{k,i}$ has the following form 
\begin{equation*}\label{eq:PDCompsupi}
S_{k,i}=\sum_{n\in\Nset\setminus\lbrace 0\rbrace}
2^{j_{k,i}^{(n)}r} w_i^{(n)}
\pdelim{2^{j_{k,i}^{(n)}}\pdelim{\cdot-\xi_{k,i}^{(n)}}}=: 
\sum_{n\in\Nset\setminus\lbrace 0\rbrace} c_{k,i}^{(n)}.
\end{equation*}

By taking into account \eqref{eq:vki} we will be able to get concentration terms of $\chi_i u_k$ by composing each concentration term $c_{k,i}^{(n)}$ of $v_{k,i}$
with $\exp_{z_{i}}^{-1}$. More in detail we consider for all $i\in I$ the term, defined on $\mathcal{B}_\rho(z_i)$,
\begin{equation}
\label{eq:conc}
\mathcal{C}_{k,i}^{(n)}:= c_{k,i}^{(n)}\circ \exp_{z_{i}}^{-1} =
2^{j_{k,i}^{(n)} r} w_i^{(n)}\pdelim{2^{j^{(n)}_{k,i}}\pdelim{\exp_{z_{i}}^{-1}(\cdot)-\xi^{(n)}_{k,i}}}.
\end{equation}
Setting 
\begin{equation}\label{eq:changesy}
y_{k,i}^{(n)}:=\exp_{z_{i}}(\xi^{(n)}_{k,i})
\end{equation}
we have that 
\begin{equation*}
\label{eq:concC}
\mathcal{C}_{k,i}^{(n)}=
2^{j_{k,i}^{(n)} r} w_i^{(n)}\pdelim{2^{j_{k,i}^{(n)}}\pdelim{\exp_{z_{i}}^{-1}(\cdot)-\exp_{z_{i}}^{-1}(y_{k,i}^{(n)})}}.
\end{equation*}
Since for all $i\in I$ and $n\in\Nset\setminus\lbrace 0\rbrace$
\begin{equation*}
\label{eq:profilesin}
w_i^{(n)}:=\wlim_{k\rightarrow \infty}
2^{-j_{k,i}^{(n)} r}(\chi_i u_k)\circ \exp_{z_{i}}\pdelim{2^{-j_{k,i}^{(n)}}\;\cdot+\xi_{k,i}^{(n)}},
\end{equation*}
we can see that
$w_i^{(n)}$ ``evaluates'' $\chi_i u_k$ on points belonging to $\mathcal{B}_\rho(z_{i})$ which are mapped by $\exp_{z_{i}}^{-1}$ in subsets of $B_\rho(0)$ which are (for large $k$) concentrated around the points $\xi_{k,i}^{(n)}$. So, due to \eqref{eq:changesy}, it is sufficient to evaluate $w_i^{(n)}$ on points which belong also to $\mathcal{B}_\rho(y_{k,i}^{(n)})$. 
So, setting 
\begin{equation}
\label{eq:Bin}
B_{i,k,n}:=\exp_{y_{k,i}^{(n)}}^{-1}(\mathcal{B}_\rho(y_{k,i}^{(n)})\cap\mathcal{B}_\rho(z_{i}))\subset B_\rho(0),
\end{equation}
we shall consider the transition map between the charts $(\mathcal{B}_\rho(y_{k,i}^{(n)}),\exp_{y_{k,i}^{(n)}}^{-1})$ and $(\mathcal{B}_\rho(z_{i}),\exp_{z_{i}}^{-1})$, i.e.\ the map
\begin{equation}\label{eq:changespsi}
\psi_{i,k,n}:= \exp_{z_{i}}^{-1}\circ\exp_{y_{k,i}^{(n)}}
\end{equation}
defined on $B_{i,k,n}$. Note that
$\psi_{i,k,n}(0)=\xi^{(n)}_{k,i}$, moreover, by setting for any $x\in B_{i,k,n}$
\begin{equation}\label{eq:changeseta}
\eta:= 2^{j_{k,i}^{(n)}}\exp^{-1}_{y_{k,i}^{(n)}}(x),
\end{equation}
we have
$\exp_{z_{i}}^{-1}(x)=\psi_{i,k,n}(2^{-j_{k,i}^{(n)}}\eta)$ for all $x\in B_{i,k,n}$.
Therefore (by using Taylor expansion of the first order of the transition map $\psi_{i,k,n}$ at $0$, where, to use a lighter notation we denote by $\psi_{i,k,n}^\prime(0)$ the Jacobi matrix of $\psi_{i,k,n}$ at zero, 
$(\psi_{i,k,n}^\prime(0))^{-1}$ its inverse
and omit the dot symbol for the rows-by-columns product) we deduce
\begin{equation}
\label{eq:approx}
\begin{split}
& 2^{j^{(n)}_{k,i}}\pdelim{\exp_{z_{i}}^{-1}(x)-\xi^{(n)}_{k,i}} =
2^{j^{(n)}_{k,i}}\pdelim{\psi_{i,k,n}( 2^{-j_{k,i}^{(n)}}\eta)-\xi^{(n)}_{k,i}}=
2^{j^{(n)}_{k,i}}\pdelim{\psi_{i,k,n}( 2^{-j_{k,i}^{(n)}}\eta)-\psi_{i,k,n}(0)}=\\
&\qquad=\psi_{i,k,n}^\prime(0)\eta+O( 2^{-j_{k,i}^{(n)}}\eta^2)=
 2^{j^{(n)}_{k,i}} \psi_{i,k,n}^\prime(0) \exp_{y_{k,i}^{(n)}}^{-1}(x)+O\pdelim{2^{j^{(n)}_{k,i}}\pdelim{\exp^{-1}_{y_{k,i}^{(n)}}(x)}^2}.
\end{split}
\end{equation}
%
Without loss of generality, applying Arzel\`a-Ascoli theorem and passing to a suitable subsequence, we can assume that $\pdelim{\psi_{i,k,n}}_{k\in\Nset}$ converges in the norm of $C^1(\Rset^N)$ as $k\to \infty$ to some function $\psi_{i,n}$. 
We claim that,
under a suitable renaming of the profile $w_i^{(n)}$, namely\
by renaming $w_i^{(n)}(\psi_{i,n}^\prime(0)\;\cdot)$ as 
$w_i^{(n)}$, concentration terms  $\mathcal{C}_{k,i}^{(n)}$ (of $\chi_i u_k$) in \eqref{eq:conc} take the following form:
\begin{equation*}
\label{eq:concM}
\tilde{\mathcal{C}}_{k,i}^{(n)}:=
2^{j_{k,i}^{(n)} r}
w_i^{(n)}\pdelim{2^{j_{k,i}^{(n)}}
\exp_{y_{k,i}^{(n)}}^{-1}(\cdot)}.
\end{equation*}
For this purpose we show that, as $k\to\infty$,
\begin{equation*}
\label{eq:in2i}
\int_{\mathcal{B}_\rho(y_{k,i}^{(n)})\cap\mathcal{B}_\rho(z_{i})}\vdelim{2^{j_{k,i}^{(n)}r} \mathrm{d}
\pdelim{ w_i^{(n)}\pdelim{2^{j^{(n)}_{k,i}}(\exp_{z_{i}}^{-1}(x)-\xi^{(n)}_{k,i})} - 
w_i^{(n)}\pdelim{2^{j^{(n)}_{k,i}}\psi_{i,n}^\prime(0)\exp_{y_{k,i}^{(n)}}^{-1}(x)} }}^2
\mathrm{d}v_g
{\rightarrow} 0.
\end{equation*}

Indeed, the previous relation written under the coordinate map $\exp_{y_{k,i}^{(n)}}$, i.e.\ by setting $\xi= \exp_{y_{k,i}^{(n)}}^{-1}(x)$ becomes (by taking into account \eqref{eq:changespsi} and \eqref{eq:Bin})
\begin{align*}
\int_{B_{i,k,n}}\vdelim{2^{j_{k,i}^{(n)} r} 
\nabla\pdelim{w_i^{(n)}\pdelim{2^{j^{(n)}_{k,i}}(\psi_{i,k,n}(\xi)-\xi^{(n)}_{k,i})}- w_i^{(n)}\pdelim{2^{j^{(n)}_{k,i}}\psi_{i,n}^\prime(0)\xi} }}^2
\mathrm{d}\xi
{\rightarrow} 0  \mbox{ as } k\to\infty,
\end{align*}
and, by taking into account \eqref{eq:changeseta} (and by a null extension to whole of $\Rset^N$ of the involved functions), the claim will follow if, as $k\to\infty$,
%
\begin{align*}
2^{-j_{k,i}^{(n)}\frac{N+2}{2}}
\int_{\Rset^N}
\vdelim{\psi_{i,k,n}^\prime(2^{-j_{k,i}^{(n)}}\eta)
\nabla w_i^{(n)}\pdelim{2^{j^{(n)}_{k,i}}(\psi_{i,k,n}(2^{-j_{k,i}^{(n)}}\eta)-\xi^{(n)}_{k,i})}- 
\psi_{i,n}^\prime(0)\nabla w_i^{(n)}(\psi_{i,n}^\prime(0)\eta)
}^2
\mathrm{d}\eta
{\rightarrow} 0.
\end{align*}
This last convergence easily 
follows by Lebesgue dominated convergence theorem, 
indeed (for all $n$ and for all $i$) 
$\nabla w_i^{(n)}\in L^2(\Rset^N)$, and when $k\to\infty$, we have
$j_{k,i}^{(n)}{\rightarrow} +\infty$, and
(by taking into account that convergence of $\pdelim{\psi_{i,k,n}}_{k\in\Nset}$ and $\pdelim{\psi'_{i,k,n}}_{k\in\Nset}$ to $\psi_{i,n}$ and $\psi'_{i,n}$ respectively is uniform)
the pointwise convergence of 
$\psi_{i,k,n}^\prime (2^{-j^{(n)}_{k,i}}\eta){\rightarrow}\psi_{i,n}^\prime(0)$,
$2^{j^{(n)}_{k,i}}\pdelim{\psi_{i,k,n} (2^{-j^{(n)}_{k,i}}\eta)-\xi_i^{(n)}}
{\rightarrow}\psi_{i,n}^\prime(0)\eta$ (as easily follows by \eqref{eq:approx} and \eqref{eq:changeseta}).

It is easy to see now that the renamed profiles $w_i^{(n)}$ are obtained as pointwise limits (and thus also as weak limits)
\begin{equation}
\label{eq:profilesRen}
w_i^{(n)}(\xi)=\lim_{k\rightarrow \infty}
2^{-j_{k,i}^{(n)} r}(\chi_i u_k)\circ \exp_{y_{k,i}^{(n)}}\pdelim{2^{-j_{k,i}^{(n)}}\xi}, \mbox{ for a.e. } \xi\in \Rset^N.
\end{equation}

\vskip3mm
2. Since each $\overline{\mathcal{B}}_\rho(z_i)\subset \mathcal{B}_{2\rho}(z_i)\subset M$ and $M$ is compact, we may assume that for all $n\in\Nset\setminus\Bdelim{0}$ and for all $i\in I$, there exist, up to subsequences, points of concentration
\begin{equation}
\label{eq:limitpoints}
\overline{y}_i^{(n)}:=\lim_{k\rightarrow \infty} y_{k,i}^{(n)}.
\end{equation}

 In order to achieve the orthogonality relation \eqref{eq:expl_orth} we shall 
introduce the following equivalence relation on the set of sequences in $M\times \Rset$. Namely given 
$\pdelim{y_k, j_k}_{k\in\Nset}$ and $\pdelim{y'_k, j'_k}_{k\in\Nset}$ in $M\times \Zset$ we shall write 
\vskip2mm
\noindent
\begin{equation}
\label{eq:Rel}
\tag{$\mathcal{R}$}
\pdelim{y_k, j_k}_{k\in\Nset}\simeq \pdelim{y'_k, j'_k}_{k\in\Nset}
\; \mbox{ when }\;
\pdelim{|j_k-j_k'|+2^{j_k}d(y_k,y'_k)}_{k\in\Nset} \mbox{ is a bounded sequence}.
\end{equation}
Since the set $I$ is a finite set, the number of sequences 
$\pdelim{y_{k,i}^{(n)}, j_{k,i}^{(n)}}_{k\in\Nset}$
which can be equivalent to a fixed sequence 
$\pdelim{y_{k,\bar{\imath}}^{(\bar{n})}, j_{k,\bar{\imath}}^{(\bar{n})}}_{k\in\Nset}$ is finite.
Therefore we can exploit the unconditional convergence with respect to the indexes $(n)$ of the series $S_{k,i}$ and synchronize them by replacing $\bar{n}$ and all the indexes $m$ in the finite set 
\begin{equation*}
\label{eq:ni}
\mathcal N_{\bar n}:=\Bdelim{m\in \Nset \setminus \Bdelim{0}\;|\; \exists i\in I \mbox{ s.t. } \pdelim{y_{k,i}^{(n)}, j_{k,i}^{(n)}}_{k\in\Nset}\simeq \pdelim{y_{k,\bar{\imath}}^{(\bar{n})}, j_{k,\bar{\imath}}^{(\bar{n})}}_{k\in\Nset}}
\end{equation*}
with, say, the smallest integer in $\mathcal N_{\bar n}$.

Thanks to this synchronization procedure the following property
\begin{equation*}
\label{eq:PR1}
\pdelim{y_{k,i_1}^{(n)}, j_{k,i_1}^{(n)}}_{k\in\Nset}\simeq 
\pdelim{y_{k,i_2}^{(m)}, j_{k,i_2}^{(m)}}_{k\in\Nset} 
\quad \Longleftrightarrow\quad
m=n,
\end{equation*}
holds true for all $i_1,i_2\in I$ and $m,n \in\Nset\setminus \Bdelim{0}$.

Note also that when $\pdelim{y_{k,i_1}^{(n)}, j_{k,i_1}^{(n)}}_{k\in\Nset}\simeq 
\pdelim{y_{k,i_2}^{(n)}, j_{k,i_2}^{(n)}}$,
since $\pdelim{\vdelim{j_{k,i_2}^{(n)}-j_{k,i_1}^{(n)}}}_{k\in\Nset}$ is bounded, we can set, modulo subsequences
\begin{equation*}
\label{eq:j}
j(i_1,i_2,n):=
\lim_{k\rightarrow +\infty} j_{k,i_2}^{(n)}-j_{k,i_1}^{(n)}\in\Rset,
\end{equation*}
so that, by redefining $w_{i_2}^{(n)}(2^{-j(i_1,i_2,n)}\cdot)$ as (the corresponding profile) $w_{i_2}^{(n)}$, we can assume that 
$\pdelim{j_{k,i_2}^{(n)}}_{k\in\Nset}=\pdelim{j_{k,i_1}^{(n)}}_{k\in\Nset}$.
Moreover, since also 
$\pdelim{2^{j_{k,i_1}^{(n)}}d\pdelim{y_{k,i_1}^{(n)},y_{k,i_2}^{(n)}}}_{k\in\Nset}$ is bounded, we get (by \eqref{eq:jtoinf}) that (see \eqref{eq:limitpoints}) 
\begin{equation}
\label{eq:erasing}
\bar{y}_{i_1}^{(n)}=\bar{y}_{i_2}^{(n)} \quad \mbox{ for all } \pdelim{y_{k,i_1}^{(n)}, j_{k,i_1}^{(n)}}_{k\in\Nset}\simeq 
\pdelim{y_{k,i_2}^{(n)}, j_{k,i_2}^{(n)}}_{k\in\Nset}.
\end{equation}

Finally, we show that 
the elementary concentrations terms $C_{k,i}^{(n)}$ do not change (up to a vanishing term) by varying 
$\pdelim{y_{k,i}^{(n)}, j_{k,i}^{(n)}}_{k\in\Nset}$ in the same equivalence class.
Namely the following property holds true
\begin{equation*}
\label{eq:PR2}
\pdelim{y_{k,i_1}^{(n)}, j_{k,i_1}^{(n)}}_{k\in\Nset}\simeq 
\pdelim{y_{k,i_2}^{(n)}, j_{k,i_2}^{(n)}}_{k\in\Nset}
\quad\Rightarrow\quad
\Vdelim{C_{k,i_1}^{(n)}-C_{k,i_2}^{(n)}}\to 0,
\end{equation*}
for all $i_1,i_2\in I$.
Since, as shown above, we can assume, without restrictions, that $\pdelim{j_{k,i_1}^{(n)}}_{k\in\Nset}=\pdelim{j_{k,i_2}^{(n)}}_{k\in\Nset}$ (and we shall denote, to shorten notation, their common value as $\pdelim{j_{k}^{(n)}}_{k\in\Nset}$) it will suffice to prove that,
set $\bar\xi^{(n)}_{k,i_1}=\exp_{z_{i_1}}^{-1}y^{(n)}_{k,i_1}$ and
$\bar\xi^{(n)}_{k,i_2}=\exp_{z_{i_1}}^{-1}y^{(n)}_{k,i_2}$, we have
 \begin{equation}
 \int_{\mathcal{B}_\rho(z_{i_1})}\vdelim{2^{j_{k}^{(n)}r} \mathrm{d}
 	\pdelim{ w_{i_1}^{(n)}\pdelim{2^{j^{(n)}_{k}}(\exp_{z_{i_1}}^{-1}(x)-\bar\xi^{(n)}_{k,i_2})} - 
 	w_{i_1}^{(n)}\pdelim{2^{j^{(n)}_{k}}(\exp_{z_{i_1}}^{-1}(x)-\bar\xi^{(n)}_{k,i_1})}
  }}^2
 \mathrm{d}v_g
 {\rightarrow} 0 \mbox{ as } k\to\infty.
 \end{equation}
 Indeed,
we get, modulo subsequences, that
 \begin{equation*}
 2^{j^{(n)}_{k}}|\bar\xi^{(n)}_{k,i_2}-\bar\xi^{(n)}_{k,i_1}|=
 	2^{j^{(n)}_{k}}|\exp_{z_{i_1}}^{-1}y^{(n)}_{k,i_2}-\exp_{z_{i_1}}^{-1}y^{(n)}_{k,i_1}|=
 	2^{j^{(n)}_{k}}|d(y^{(n)}_{k,i_2},z_{i_1})-(y^{(n)}_{k,i_1},z_{i_1})|\le
 	 	2^{j^{(n)}_{k}}d(y^{(n)}_{k,i_2},y^{(n)}_{k,i_1})\to 0.
\end{equation*}
Then, \eqref{eq:expl_orth} follows directly from \eqref{eq:erasing}.

\vskip3mm
3. Consider now the sum $\sum_{n\in\Nset\setminus \Bdelim{0}}\sum_{i\in I}\tilde{\mathcal{C}}_{k,i}^{(n)}$,
with the sequences $\pdelim{y_{k,i}^{(n)}}_{k\in\Nset}$ and $\pdelim{j_{k,i}^{(n)}}_{k\in\Nset}$, which are synchronized at the Step 2 as $\pdelim{y_{k}^{(n)}}_{k\in\Nset}$ and $\pdelim{j_{k}^{(n)}}_{k\in\Nset}$, while  $y_{k}^{(n)}\to \bar y^{(n)}$ and \eqref{eq:profilesRen} takes the form 
\begin{equation}
\label{eq:profilesRenSync}
w_i^{(n)}(\xi)=\lim_{k\rightarrow \infty}
2^{-j_{k}^{(n)} r}(\chi_i u_k)\circ \exp_{y_{k}^{(n)}}\pdelim{2^{-j_{k}^{(n)}}\xi}, \mbox{ for a.e. } \xi\in \Rset^N.
\end{equation}
Since 
$j_k^{(n)}\to\infty$ implies $\exp_{y_{k}^{(n)}}\pdelim{2^{-j_{k}^{(n)}}\xi}\to  \bar y^{(n)}$ in $M$, we have from  \eqref{eq:profilesRenSync} 
\begin{equation}
\label{eq:profilesRenSync2}
w_i^{(n)}(\xi)=\chi_i(\bar y^{(n)}) \lim_{k\rightarrow \infty} 
2^{-j_{k}^{(n)} r}u_k\circ \exp_{y_{k}^{(n)}}\pdelim{2^{-j_{k}^{(n)}}\xi}, \mbox{ for a.e. } \xi\in \Rset^N,
\end{equation}
taking into account that for each $\xi\in\Rset^N$ the limit is evaluated with $k\ge k(\xi)$ with some $k(\xi)$ sufficiently large.
Set
\begin{equation*}
\label{eq:profilen}
w^{(n)}:=\sum_{i\in I}w_i^{(n)}. 
\end{equation*}
Then relation \eqref{eq:profileM} immediately follows from \eqref{eq:profilesRenSync2},  
$w_i^{(n)}=\chi_i(\bar y^{(n)})w^{(n)}$,  and
since, by Step 1, defect of compactness of $\chi_i u_k$ is a unconditionally convergent series, we have
 \begin{align*} 
\sum_{i\in I}\sum_{n\in\Nset\setminus \Bdelim{0}}\tilde{\mathcal{C}}_{k,i}^{(n)}(x)=
\sum_{n\in\Nset\setminus \Bdelim{0}}\sum_{i\in I}\tilde{\mathcal{C}}_{k,i}^{(n)}(x)=
\\
\sum_{n\in\Nset\setminus \Bdelim{0}}\sum_{i\in I} 2^{j_{k}^{(n)}r}
 w_i^{(n)}\pdelim{2^{j_{k}^{(n)}}\exp_{y_{k}^{(n)}}^{-1}(x)}=
\\
\sum_{n\in\Nset\setminus \Bdelim{0}}w^{(n)}\pdelim{2^{j_{k}^{(n)}}\exp_{y_{k}^{(n)}}^{-1}(x)},\; x\in \mathcal B_\rho(y_k^{(n)})\;,
 \end{align*}
which gives \eqref{eq:pdM}. 
\vskip3mm
4. In order to prove the ``energy'' estimate \eqref{PlancherelM}, assume, without loss of generality, that the sum in  \eqref{eq:pdM} is finite and that all $w^{(n)}$ have compact support, and expand by bilinearity the trivial inequality
$\|u-u_k+\mathcal{S}_k\|_{H^{1,2}(M)}^2\ge 0$. Then, by using the norm \eqref{eq:norm} and the representation  \eqref{eq:scalarp} of the scalar product in $H^{1,2}(M)$,
we have
\begin{equation}\label{eq:preParsevalM}
\begin{split}
0 & \le \|u_k\|^2+\|u\|^2-2\langle u_k,u\rangle +2\langle u-u_k,\mathcal{S}_k\rangle+
\\
\; &\sum_n\|2^{j_k^{(n)}r}\,\Chi\circ\exp^{-1}_{y^{(n)}_k}\,w^{(n)}\pdelim{2^{j^{(n)}_k}\exp^{-1}_{y^{(n)}_k}(\cdot)}\|^2-
\\
\; &\sum_{m\neq n}\adelim{2^{j_k^{(m)}r}\,\Chi\circ\exp^{-1}_{y^{(m)}_k}\,w^{(m)}\pdelim{2^{j^{(m)}_k}\exp^{-1}_{y^{(m)}_k}(\cdot)},
2^{j_k^{(n)} r}\,\Chi\circ\exp^{-1}_{y^{(n)}_k}\,w^{(n)}\pdelim{2^{j^{(n)}_k}\exp^{-1}_{y^{(n)}_k}(\cdot)}}.
\end{split}
\end{equation}
The first line of \eqref{eq:preParsevalM} can be evaluated taking into account that $u_k\rightharpoonup u$, $\mathcal{S}_k\rightharpoonup 0$, that the definition of profiles $w^{(n)}$ given by \eqref{eq:profileM}
 and that $r=\frac{N-2}{2}$.
\begin{align*}
\|u_k\|^2+\|u\|^2-2\langle u_k,u\rangle +2\langle u-u_k,\mathcal{S}_k\rangle=
\nonumber
\\
\|u_k^2\|+\|u^2\|-2\|u\|^2+o(1)-2\sum_n\adelim{ u_k,
2^{j_k^{(n)}r}\,\Chi\circ\exp^{-1}_{y^{(n)}_k}\,w^{(n)}\pdelim{2^{j^{(n)}_k}\exp^{-1}_{y^{(n)}_k}(\cdot)}}=
\nonumber
\\
\|u_k\|^2-\|u\|^2+o(1)-
\nonumber
\\
2\sum_n2^{j_k^{(n)}r}\int_{|\xi|<\rho}\sum_{i,j=1}^N g_{ij}^{y^{(n)}_k}
\partial_i\pdelim{u_k(\exp_{y^{(n)}_k}(\xi))}
\partial_j \pdelim{\Chi(\xi)\,w^{(n)}(2^{j^{(n)}_k}\xi)}
\sqrt{\det g_{i,j}^{y^{(n)}_k}(\xi)}{\mathrm d}\xi-
\nonumber
\\
\nonumber
2\sum_n2^{j_k^{(n)}r}\int_{|\xi|<\rho}u_k(\exp_{y^{(n)}_k}(\xi))
\Chi(\xi)\,w^{(n)}(2^{j^{(n)}_k}\xi)\sqrt{\det g_{i,j}^{y^{(n)}_k}(\xi)}{\mathrm d}\xi
=
\\
\|u_k\|^2-\|u\|^2+o(1)-
\nonumber
\\
2\sum_n\int_{|\eta|<\rho 2^{j_k^{(n)}}} \sum_{i,j=1}^N g_{ij}^{y^{(n)}_k}
\partial_i \pdelim{2^{-j_k^{(n)}r}u_k\circ \exp_{y^{(n)}_k}(2^{-j_k^{(n)}}\eta)}
\partial_j \pdelim{\Chi(2^{-j_k^{(n)}}\eta)\,w^{(n)}(\eta)}\cdot \quad
\nonumber
\\
\sqrt{\det g_{i,j}^{y^{(n)}_k}(2^{-j_k^{(n)}}\eta)}\;{\mathrm d}\eta -
\nonumber
\\
2\sum_n
2^{-2j_k^{(n)}} \int_{|\eta|<\rho 2^{j_k^{(n)}}}2^{-j_k^{(n)}r}u_k\circ\exp_{y^{(n)}_k}(2^{-j^{(n)}_k}\eta) \Chi(2^{-j^{(n)}_k}\eta)w^{(n)}(\eta)\sqrt{\det g_{i,j}^{y^{(n)}_k}(2^{-j^{(n)}_k}\eta)}
{\mathrm d}\eta=
\nonumber
\\
\|u_k\|^2-\|u\|^2+o(1)-
2\sum_n\int_{\Rset^N} \sum_{i}^N |\partial_i w^{(n)}(\eta)|^2{\mathrm d}\eta
-2\sum_n
2^{-2j_k^{(n)}} \int_{\Rset^N}|w^{(n)}(\eta)|^2{\mathrm d}\eta=
\nonumber
\\
\|u_k\|^2-\|u\|^2-2\sum_n \|\nabla w^{(n)}\|_2^2 +o(1).
\end{align*}
(In the third equality we have set $\eta=2^{j_k^{(n)}}\xi$, while in the fourth we have used the fact, due to \eqref{eq:profileM} that
$2^{-j_k^{(n)}} \Chi(2^{-j^{(n)}_k}\cdot) (u_k\circ \exp_{y^{(n)}_k})(2^{-j_k^{(n)}}\cdot){\rightharpoonup} \Chi(0) w^{(n)}= w^{(n)}$ as $k\to\infty$ (in our slightly modified sense of weak convergence). Note also we have still denoted by $\partial_i$ (resp. $\partial_j$) the derivative with respect to the $i^{th}$ (resp $j^{th}$) component of $\eta=2^{j_k^{(n)}}\xi$. Finally in the last equality we have used \eqref{eq:norm}).

In order to estimate the second line of \eqref{eq:preParsevalM} we shall split (according to \eqref{eq:norm}) the $H^{1,2}(M)$-norm into the $L^2$-norm of the gradient (gradient part) and the $L^2$-norm of the function ($L^2$ part) and consider first the latter.
Since
\begin{align*}
\sum_n \Vdelim{2^{j_k^{(n)}\frac{N-2}{2}}\,\Chi\circ\exp^{-1}_{y^{(n)}_k}\,w^{(n)}(2^{j^{(n)}_k}\exp^{-1}_{y^{(n)}_k}(\cdot))}_2^2=
\\
\sum_n 2^{j_k^{(n)}(N-2)}\int_{\mathcal{B}_\rho(y_n)} |\Chi\circ\exp^{-1}_{y^{(n)}_k}(x)\,w^{(n)}(2^{j^{(n)}_k}\exp^{-1}_{y^{(n)}_k}(x))|^2 {\mathrm d} v_g
=
\\
\sum_n 2^{j_k^{(n)}(N-2)}\int_{|\xi|<\rho}|\Chi(\xi)(w^{(n)}(2^{j^{(n)}_k}\xi)|^2\sqrt{\det g_{i,j}^{y^{(n)}_k}(\xi)}{\mathrm d}\xi
=
\\
\sum_n2^{-2j_k^{(n)}}\int_{|\eta|<\rho 2^{j_k^{(n)}}} |\Chi(2^{-j^{(n)}_k}\eta)w^{(n)}(\eta)|^2\sqrt{\det g_{i,j}^{y^{(n)}_k}(2^{-j^{(n)}_k}\eta)}
{\mathrm d}\eta{\rightarrow} 0\mbox{ as } k\to\infty,
\end{align*}
(since $j_k^{(n)}{\rightarrow}\infty$) as $k\to\infty$, the second line of \eqref{eq:preParsevalM} is evaluated in the limit by the sum of the gradient terms as follows.
\begin{equation*}
\begin{split}
\sum_n 2^{j_k^{(n)}(N-2)}
\int_{\mathcal{B}_\rho({y^{(n)}_k})}
\,\vdelim{{\mathrm d}\pdelim{\Chi\circ\exp^{-1}_{y^{(n)}_k}(x)\,w^{(n)}(2^{j^{(n)}_k}\exp^{-1}_{y^{(n)}_k}(x))}}^2
{\mathrm d} v_g=
\\
\sum_n 2^{j_k^{(n)}(N-2)}
\int_{|\xi|<\rho}\sum_{i,j=1}^N g_{ij}^{y^{(n)}_k}(\xi)
\partial_i
\pdelim{\Chi(\xi)\,w^{(n)}(2^{j^{(n)}_k}\xi)}
\partial_j
\pdelim{\Chi(\xi)\,w^{(n)}(2^{j^{(n)}_k}\xi)}
\sqrt{\det g_{i,j}^{y^{(n)}_k}(\xi)}{\mathrm d}\xi=
\\
\sum_n\int_{|\eta|<\rho 2^{j_k^{(n)}}} \sum_{i,j=1}^N g_{ij}^{y^{(n)}_k}
\partial_i
\pdelim{\Chi(2^{-j_k^{(n)}}\eta)\,w^{(n)}(\eta)}
\partial_j
\pdelim{\Chi(2^{-j_k^{(n)}}\eta)\,w^{(n)}(\eta)}
\sqrt{\det g_{i,j}^{y^{(n)}_k}(2^{-j_k^{(n)}}\eta)}\;{\mathrm d}\eta \quad
\\
{\rightarrow} \sum_n\int_{\Rset^N}|\nabla w^{(n)}(\eta)|^2\;{\mathrm d}\eta
= \sum_n\Vdelim{\nabla w^{(n)}}^2 \mbox{ as }{k\rightarrow \infty}.
\end{split}
\end{equation*} 
Consider now the terms in the sum in third line of  \eqref{eq:preParsevalM}.
Note that the $L^2$-part of the scalar product converges to zero by Cauchy inequality and by the calculations for the first line of \eqref{eq:preParsevalM}. 
At the light of the orthogonality condition \eqref{eq:expl_orth} we have to face two cases.

Case 1: The sequence $(j_k^{(n)}-j_k^{(m)})_{k\in\Nset}$ is unbounded. Assume without loss of generality that  $j_k^{(n)}-j_k^{(m)}{\rightarrow} +\infty$ as $k\rightarrow\infty$. Then, using changes of variables $\xi=\exp_{y^{(n)}_k}^{-1}(x)$ and $\eta=2^{j_k^{(n)}}\xi$,
\begin{equation*}
	\begin{split}
		\adelim{2^{j_k^{(m)} r}\,\Chi\circ\exp_{y^{(m)}_k}^{-1}(x)\,w^{(m)}\pdelim{2^{j^{(m)}_k}\exp^{-1}_{y^{(m)}_k}(\cdot)},2^{j_k^{(n)} r}\,\Chi\circ\exp_{y^{(n)}_k}^{-1}(x)\,w^{(n)}\pdelim{2^{j^{(n)}_k}\exp^{-1}_{y^{(n)}_k}(\cdot)}}=
		\\
		2^{j_k^{(n)} r}2^{j_k^{(m)}r}
		\int_{\mathcal{B}_\rho(y^{(m)}_k)\cap \mathcal{B}_\rho(y^{(n)}_k)}
		\,{\mathrm d}\pdelim{
			\Chi\circ\exp_{y^{(m)}_k}^{-1}(x)\,w^{(m)}\pdelim{2^{j^{(m)}_k}\exp^{-1}_{y^{(m)}_k}(x)}}\cdot\quad
		\\
		{\mathrm d}\pdelim{\Chi\circ\exp_{y^{(n)}_k}^{-1}(x)\,w^{(n)}\pdelim{2^{j^{(n)}_k}\exp^{-1}_{y^{(n)}_k}(x)}}
		{\mathrm d} v_g+o(1)=
		\\
		2^{j_k^{(n)} r}2^{j_k^{(m)} r}
		\int_{|\xi|<\rho}\sum_{i,j=1}^N g_{ij}^{y^{(n)}_k}(\xi)
		\partial_i \pdelim{\Chi(\xi)\,w^{(n)}(2^{j_k^{(n)}}\xi)}\cdot\quad
		\\
		\partial_j \pdelim{\Chi(\exp_{y_k^{(m)}}^{-1}(\exp_{y_k^{(n)}} (\xi)))\,w^{(m)}(2^{j^{(m)}_k}\exp_{y_k^{(m)}}^{-1}(\exp_{y_k^{(n)}} (\xi)))}
		\sqrt{\det g_{i,j}^{y^{(n)}_k}(\xi)}{\mathrm d}\xi=
		\\
		2^{-j_k^{(n)} r}2^{j_k^{(m)} r}
		\int_{|\eta|<\rho 2^{j_k^{(n)}}}\sum_{i,j=1}^N g_{ij}^{y^{(n)}_k}(2^{-j_k^{(n)}}\eta)
		\partial_i \pdelim{(1+o(1))\,w^{(n)}(\eta)}\cdot\quad
		\\
		\partial_j\pdelim{(1+o(1)) \,w^{(m)}(2^{j^{(m)}_k}\exp_{y_k^{(m)}}^{-1}(\exp_{y_k^{(n)}} (2^{-j_k^{(n)}}\eta)))}
		(1+o(1)){\mathrm d}\eta+o(1)\rightarrow 0,
	\end{split}
\end{equation*}
since, by \eqref{eq:profileM},
\begin{equation*}
\wlim_{k\rightarrow \infty}
2^{-j_k^{(n)} r}2^{j_k^{(m)}r}w^{(m)}(2^{j^{(m)}_k}(\exp_{y_k^{(m)}}^{-1}\circ\exp_{y_k^{(n)}}) (2^{-j_k^{(n)}}\cdot))=\wlim_{k\rightarrow \infty}2^{-j_k^{(n)} r} u_k(\cdot)=0.
\end{equation*}

Case 2: $2^{j_k^{(n)}}d(y_k^{(n)},y_k^{(m)})\to\infty$ as $k\to\infty$.  
Since case 1 has been ruled out, we can assume without restrictions that the sequence $j_k^{(m)}-j_k^{(n)}=j\in\Rset$ for all large $k$. Then, by arguing as above (and in particular by taking into account that the $L^2$-part of the scalar product is negligible), we get that, as $k\to\infty$,
\begin{equation*}
\adelim{2^{j_k^{(m)} r}\,\Chi\circ\exp^{-1}_{y^{(m)}_k}\,w^{(m)}(2^{j^{(m)}_k}\exp^{-1}_{y^{(m)}_k}(\cdot)),2^{j_k^{(n)} r}\,\Chi\circ\exp^{-1}_{y^{(n)}_k}\,w^{(n)}(2^{j^{(n)}_k}\exp^{-1}_{y^{(n)}_k}(\cdot))}\rightarrow 0,
\end{equation*}
since 
%
the values of $w^{(m)}$ and of $w^{(n)}$ are set to concentrate at sufficiently separated points, indeed
$d(2^{j_k^{(n)}}y_k^{(n)}, 2^{j_k^{(m)}}y_k^{(m)})=2^{j_k^{(n)}}
d(y_k^{(n)}, 2^{j}y_k^{(m)})\geq 
2^{j_k^{(n)}}
d(y_k^{(n)}, y_k^{(m)})\to\infty$. 

Then, by applying the estimates obtained for the three lines of inequality \eqref{eq:preParsevalM} we finally deduce \eqref{PlancherelM} concluding the proof of Theorem~\ref{thm:StSol}. 


\vskip5mm\noindent

\noindent{Acknowledgment.}
The first author is supported by GNAMPA of the ``Istituto Nazionale di
Alta Matematica (INdAM)'' and by MIUR - FFABR - 2017 research grant.

\noindent http://dx.doi.org/10.13039/501100003407
\vskip1mm\noindent
The second author had no academic affiliation when working on this paper.


%
  
\bibliographystyle{acmurl}

{\small






%


%
%

\end{document}